\newtheorem{theorem}{Theorem}[section]
\newtheorem{corollary}[theorem]{Corollary}
\newtheorem{lemma}[theorem]{Lemma}
\theoremstyle{definition}
\theoremstyle{remark}
\newtheorem*{remark}{Remark}
\newtheorem*{acknowledgement}{Acknowledgement}
\newcommand{\onehalf}{\frac{1}{2}}
\newcommand{\N}{\mathbb{N}}
\newcommand{\R}{\mathbb{R}}
\newcommand{\ra}{\rightarrow}
\newcommand{\dx}{\ \mathrm{d}x}
  \date{}
\begin{document}
	\title{On the $L^2$-norm of Gegenbauer polynomials}
	\author{Damir Ferizovi\'{c}\\
		{damir.ferizovic@tugraz.at}  
		\thanks{The author thankfully acknowledges support
	by the Austrian Science Fund (FWF): F5503 ``Quasi-Monte Carlo Methods'' and FWF:  W1230 ``Doctoral School Discrete Mathematics'', and the Austrian Marshall Plan Foundation. }}
\maketitle 
\begin{abstract}
	Gegenbauer, also known as ultra-spherical polynomials   appear often in numerical analysis or interpolation. In the present text we find a recursive formula for, and compute the asymptotic behavior of their $L^2$-norm.\\
	%MSC2020: 33C45, 33F99, 41A60
%	\keywords{Gegenbauer polynomials \and $L^2$-norm \and Asymptotics}
	% \PACS{PACS code1 \and PACS code2 \and more}
	% \subclass{MSC code1 \and MSC code2 \and more}
\end{abstract}

\section{Notation and Results}

  Gegenbauer polynomials $\mathcal{C}_{n}^{(\lambda)}$, where  $\lambda\in I_G:=(-\onehalf,0)\cup(0,\infty)$ is called  the index and $n\in\N_0$ is the degree, are the coefficients of following power series expansion in $\alpha$:
  $$ (1-2x\alpha+\alpha^2)^{-\lambda}=\sum_{n=0}^{\infty}\mathcal{C}_{n}^{(\lambda)}(x)\alpha^n.
  $$
  The case $\lambda=0$ is not considered here.
  $\{\mathcal{C}_{n}^{(\lambda)}\}_{n\in\N_0}$ are orthogonal  with respect to the measure $(1-x^2)^{\lambda-1/2}\dx$ over  $[-1,1]$, and by \cite[Eq. 8.930]{Gradshteyn}:
  \begin{equation}\label{eq_FirstGegenbauers}
   \forall\lambda\in I_G:\hspace{1cm}\mathcal{C}_{0}^{(\lambda)}(x)=1,\hspace{1cm} \mathcal{C}_{1}^{(\lambda)}(x)=2\lambda x.
  \end{equation}
  For continuous $f:[0,1]\ra\R$, the  following notation will be used:
  $$
  \|f\|_2^2:=\int_0^1 [f(x)]^2 \dx.
  $$
 We derive an asymptotic formula for $\|\mathcal{C}_{n}^{(\lambda)}\|^2_2$ when $\lambda>0$  in Corollary \ref{cor_maintheorem}. 
 Indeed, one of the key ingredients in \cite{Ferizovic} was the asymptotic nature of $\|\mathcal{C}_{n}^{(2)}\|^2_2$ in $n$, and  the following lemma was proved in \cite[Lemmas 6.1 and 6.2]{Ferizovic}:
\begin{lemma}\label{lem_Ferizovic} Let  $\psi$ denote the  digamma function 	and $\gamma$  the Euler-Mascheroni constant. Then the Gegenbauer polynomials satisfy for  $n\geq 2$:
	\begin{equation*}
	\begin{split}
		\big\|\sqrt{1-x^2}\ \mathcal{C}_{n-2}^{(2)}\big\|_2^2
	&=\tfrac{1}{16}(2n^2-1)\big(\psi(n+\tfrac{1}{2})+\gamma+\log(4)\big) -\tfrac{1}{8}n^2,\\
	\big\|\mathcal{C}_{n-2}^{(2)}\big\|^2_2
	&=\tfrac{1}{16}n^4+\tfrac{1}{64}(4n^2-1)\big(\psi(n+\tfrac{1}{2})+\gamma+\log(4)\big) -\tfrac{5}{32}n^2.
	\end{split}
	\end{equation*}
\end{lemma}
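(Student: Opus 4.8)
The plan is to compute both integrals explicitly by exploiting the fact that $\mathcal{C}_{n-2}^{(2)}$ is a polynomial with known coefficients, reducing each $L^2$-norm to a finite sum that telescopes into closed form. First I would write down an explicit expression for $\mathcal{C}_{m}^{(2)}(x)$, either from the generating function or from the classical hypergeometric/coefficient formula for Gegenbauer polynomials with $\lambda=2$; the special index $\lambda=2$ makes the coefficients particularly clean, essentially weighted by small polynomial factors in the summation index. With such a representation in hand, the integrals $\int_0^1 (1-x^2)[\mathcal{C}_{n-2}^{(2)}(x)]^2\dx$ and $\int_0^1 [\mathcal{C}_{n-2}^{(2)}(x)]^2\dx$ become finite linear combinations of the elementary moments $\int_0^1 x^{2j}\dx = 1/(2j+1)$ and $\int_0^1 x^{2j+2}\dx$, so everything is in principle reducible to rational sums.

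The key observation driving the appearance of the digamma function is that summing reciprocals of odd integers produces partial sums of the harmonic-type series, and the identity $\sum_{k=1}^{m}\tfrac{1}{2k-1} = \tfrac12\big(\psi(m+\tfrac12)+\gamma+\log 4\big)$ (equivalently $\psi(m+\tfrac12)=-\gamma-\log 4+2\sum_{k=1}^{m}\tfrac1{2k-1}$) converts the finite odd-harmonic sums into the digamma expression seen in the statement. So the strategy is: (i) expand the square of the polynomial and integrate term by term against the weight, (ii) collect the resulting rational terms into a polynomial part (contributing the $n^4$ and $n^2$ terms) and an odd-harmonic part (contributing the $\psi(n+\tfrac12)+\gamma+\log 4$ factor), and (iii) identify the coefficients by matching. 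I would handle the two norms in parallel, since the weighted norm $\|\sqrt{1-x^2}\,\mathcal{C}_{n-2}^{(2)}\|_2^2$ and the unweighted $\|\mathcal{C}_{n-2}^{(2)}\|_2^2$ differ only by the extra factor $(1-x^2)$, so their integrands share most terms and the second can be obtained as a controlled modification of the first.

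An alternative and likely cleaner route avoids brute-force coefficient expansion: I would instead set up a recursion using the standard three-term recurrence for Gegenbauer polynomials together with the derivative/integration-by-parts relations, or use known integral formulas for products of Gegenbauer polynomials against powers of $(1-x^2)$ over the half-interval $[0,1]$. The restriction to $[0,1]$ rather than $[-1,1]$ is what breaks the orthogonality and forces the boundary contributions at $x=0$ and $x=1$ to survive, and these boundary terms are precisely what generate both the algebraic $n^4$, $n^2$ pieces and, through accumulated sums, the digamma term. I would verify the closed forms against small cases $n=2,3,4$ numerically to pin down the constants $\tfrac1{16}$, $\tfrac1{64}$, $-\tfrac{5}{32}$, and so forth.

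The main obstacle I anticipate is the bookkeeping in step (ii): correctly isolating which parts of the rational summation collapse to the polynomial terms and which accumulate into the odd-harmonic (hence digamma) term, without sign or index-shift errors. The factor $(2n^2-1)$ versus $(4n^2-1)$ appearing in front of the digamma expression in the two formulas indicates a delicate dependence on the weight, so the hardest part will be tracking the coefficient of the odd-harmonic sum precisely through the integration of the squared polynomial. Getting the additive rational constants ($-\tfrac18 n^2$ and $-\tfrac{5}{32}n^2$) exactly right will require careful summation of the leftover rational terms, which is where a generating-function or telescoping identity would be most valuable to keep the computation honest.
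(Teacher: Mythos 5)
There is a genuine gap: what you have written is a plan whose hard step is deferred, not a proof. Note first that the paper itself contains no proof of this lemma either --- it is imported verbatim from \cite[Lemmas 6.1 and 6.2]{Ferizovic} --- so your attempt must stand on its own. Your primary route (expand $\mathcal{C}_{n-2}^{(2)}$ in its explicit coefficient form, square, integrate monomial by monomial) does not produce a single odd-harmonic sum plus polynomial terms in any transparent way: it produces a \emph{double} sum of the shape
$\sum_{j,k}(-1)^{j+k}c_j c_k/(2(n-2)-2j-2k+1)$
with products of factorial-type coefficients $c_j$, and the collapse of that double sum into $\alpha n^4+\beta n^2+(\gamma_1 n^2+\gamma_2)\sum_{k=1}^{m}\tfrac{1}{2k-1}$ is precisely the content of the lemma, not a bookkeeping afterthought. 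Your step (ii) simply asserts this collapse will happen; nothing in the proposal evaluates the cross terms. Your identity $\psi(m+\tfrac12)+\gamma+\log 4=2\sum_{k=1}^{m}\tfrac{1}{2k-1}$ is correct and is indeed the right dictionary between harmonic sums and the digamma function, but it only rewrites the answer once you have it. Likewise, fixing the constants $\tfrac1{16},\tfrac1{64},-\tfrac5{32}$ by numerical checks at $n=2,3,4$ is legitimate only after one has \emph{proved} that both norms have the exact functional form (polynomial of degree $4$ in $n$ plus a quadratic in $n$ times the digamma expression); your outline never establishes that form, so the matching step is circular.

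The viable completion is the trigonometric one you only gesture at. By \eqref{eq_chebyshevT_basics} with $\lambda=1$, $2\,\mathcal{C}_{n-2}^{(2)}(x)=\frac{\mathrm{d}}{\mathrm{d}x}\mathcal{C}_{n-1}^{(1)}(x)$ and $\mathcal{C}_{n-1}^{(1)}(\cos\theta)=\sin(n\theta)/\sin\theta$, whence
\begin{equation*}
\mathcal{C}_{n-2}^{(2)}(\cos\theta)=\frac{\sin(n\theta)\cos\theta-n\cos(n\theta)\sin\theta}{2\sin^3\theta}.
\end{equation*}
Substituting $x=\cos\theta$ turns both norms into integrals over $[0,\pi/2]$ of trigonometric rational functions, and the digamma term then enters through completely explicit identities such as
\begin{equation*}
\int_0^{\pi/2}\frac{\sin^2(n\theta)}{\sin\theta}\,\mathrm{d}\theta=\sum_{j=0}^{n-1}\int_0^{\pi/2}\sin\big((2j+1)\theta\big)\,\mathrm{d}\theta=\sum_{j=0}^{n-1}\frac{1}{2j+1}=\tfrac12\big(\psi(n+\tfrac12)+\gamma+\log 4\big),
\end{equation*}
using $\sin^2(n\theta)/\sin\theta=\sum_{j=0}^{n-1}\sin((2j+1)\theta)$; the remaining pieces reduce by elementary recursions in $n$ to rational terms, giving the $n^4$ and $n^2$ contributions. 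This is the kind of argument carried out in the cited source, and it is what your sketch needs in order to become a proof: without it, the decisive computation --- the one that produces $(2n^2-1)$ versus $(4n^2-1)$ and the rational constants --- is missing entirely.
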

The following result of Corollary 5.2 from \cite{Dette} will prove to be indispensable.
%We always can compute in principle $\|\sqrt{1-x^2}\ \mathcal{C}_{n}^{(\lambda+1)}\|_2^2$  by means of $\|\mathcal{C}_{k}^{(\lambda)}\|^2_2$:
\begin{theorem}[Dette \cite{Dette}]The Gegenbauer polynomials satisfy for $\lambda\in I_G$
	\begin{equation}\label{eq_Dette}
	\left(\frac{n}{2 \lambda}\right)^2\big[\mathcal{C}_{n}^{(\lambda)}(x)\big]^2+
	(1-x^2)\big[\mathcal{C}_{n-1}^{(\lambda+1)}(x)\big]^2=
	\sum_{k=0}^{n-1}\frac{\lambda+k}{\lambda}\big[\mathcal{C}_{k}^{(\lambda)}(x)\big]^2.
	\end{equation}
\end{theorem}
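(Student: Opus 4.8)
The plan is to prove the identity by induction on $n$, after first recasting both sides solely in terms of $\mathcal{C}_n^{(\lambda)}$ and its derivative. The structural fact that makes this clean is the classical lowering relation $\frac{d}{dx}\mathcal{C}_n^{(\lambda)}=2\lambda\,\mathcal{C}_{n-1}^{(\lambda+1)}$ (consistent with $\mathcal{C}_0^{(\lambda)}=1$, $\mathcal{C}_1^{(\lambda)}=2\lambda x$ from \eqref{eq_FirstGegenbauers}), which lets me rewrite the second left-hand term as $(1-x^2)[\mathcal{C}_{n-1}^{(\lambda+1)}]^2=\frac{1}{4\lambda^2}(1-x^2)[(\mathcal{C}_n^{(\lambda)})']^2$. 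Writing $F_n(x):=n^2[\mathcal{C}_n^{(\lambda)}(x)]^2+(1-x^2)[(\mathcal{C}_n^{(\lambda)})'(x)]^2$ and $G_n(x):=4\lambda\sum_{k=0}^{n-1}(\lambda+k)[\mathcal{C}_k^{(\lambda)}(x)]^2$, the whole identity becomes $F_n=G_n$. Since $G_n-G_{n-1}=4\lambda(\lambda+n-1)[\mathcal{C}_{n-1}^{(\lambda)}]^2$, and the base case $n=1$ is immediate ($F_1=4\lambda^2 x^2+4\lambda^2(1-x^2)=4\lambda^2=G_1$), the induction step for $n\geq2$ reduces to the single telescoping identity $F_n-F_{n-1}=4\lambda(\lambda+n-1)[\mathcal{C}_{n-1}^{(\lambda)}]^2$.

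To reach this telescoping identity I would assemble two standard first-order relations valid for every index $m$: the ladder relation $(1-x^2)(\mathcal{C}_m^{(\lambda)})'=-mx\,\mathcal{C}_m^{(\lambda)}+(m+2\lambda-1)\mathcal{C}_{m-1}^{(\lambda)}$, and the three-term recurrence $m\,\mathcal{C}_m^{(\lambda)}=2(m+\lambda-1)x\,\mathcal{C}_{m-1}^{(\lambda)}-(m+2\lambda-2)\mathcal{C}_{m-2}^{(\lambda)}$. Eliminating $\mathcal{C}_{n-2}^{(\lambda)}$ between the recurrence at index $n$ and the ladder relation at index $n-1$ yields the key mixed identity
\[ n\,\mathcal{C}_n^{(\lambda)}-(n+2\lambda-1)x\,\mathcal{C}_{n-1}^{(\lambda)}=-(1-x^2)(\mathcal{C}_{n-1}^{(\lambda)})'. \]

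The heart of the argument is then a completion of the square. Using the ladder relation at index $n$ to write $(1-x^2)[(\mathcal{C}_n^{(\lambda)})']^2=\frac{[(n+2\lambda-1)\mathcal{C}_{n-1}^{(\lambda)}-nx\,\mathcal{C}_n^{(\lambda)}]^2}{1-x^2}$ and combining with $n^2[\mathcal{C}_n^{(\lambda)}]^2$, the numerator of $F_n$ rearranges as $[\,n\,\mathcal{C}_n^{(\lambda)}-(n+2\lambda-1)x\,\mathcal{C}_{n-1}^{(\lambda)}\,]^2+(n+2\lambda-1)^2(1-x^2)[\mathcal{C}_{n-1}^{(\lambda)}]^2$. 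Substituting the mixed identity collapses the first bracket to $(1-x^2)^2[(\mathcal{C}_{n-1}^{(\lambda)})']^2$, and after dividing by $1-x^2$ I obtain the clean formula
\[ F_n=(1-x^2)[(\mathcal{C}_{n-1}^{(\lambda)})']^2+(n+2\lambda-1)^2[\mathcal{C}_{n-1}^{(\lambda)}]^2. \]
Subtracting $F_{n-1}=(n-1)^2[\mathcal{C}_{n-1}^{(\lambda)}]^2+(1-x^2)[(\mathcal{C}_{n-1}^{(\lambda)})']^2$ cancels the derivative term, and the scalar $(n+2\lambda-1)^2-(n-1)^2=4\lambda(n+\lambda-1)$ delivers exactly the desired telescoping identity.

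The main obstacle is locating the right mixed identity and recognizing that completing the square produces a perfect square which that identity collapses; once this combination is found, everything else is routine algebra. A secondary point needing care is the spurious denominator $1-x^2$: the intermediate manipulations are valid as rational-function identities on $(-1,1)$, but since $F_n$ and the final expression are polynomials the conclusion extends to all $x$ by continuity. Finally, the degenerate range $\lambda\in(-\onehalf,0)$ creates no difficulty, since all three recurrences hold for every $\lambda\in I_G$ and the induction never divides by a quantity that can vanish there.
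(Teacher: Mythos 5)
Your proof is correct, but note that the paper itself contains no proof of this statement: it is quoted verbatim as Corollary 5.2 of Dette's paper \cite{Dette}, where it arises as a special case of general identities for orthogonal polynomials on a compact interval. Your argument is therefore a genuinely different, self-contained route. I checked the details: the ladder relation $(1-x^2)(\mathcal{C}_m^{(\lambda)})'=-mx\,\mathcal{C}_m^{(\lambda)}+(m+2\lambda-1)\mathcal{C}_{m-1}^{(\lambda)}$ and the three-term recurrence are both standard and valid for all $\lambda\in I_G$; eliminating $\mathcal{C}_{n-2}^{(\lambda)}$ between them does give $n\,\mathcal{C}_n^{(\lambda)}-(n+2\lambda-1)x\,\mathcal{C}_{n-1}^{(\lambda)}=-(1-x^2)(\mathcal{C}_{n-1}^{(\lambda)})'$; the completion of the square yields $F_n=(1-x^2)[(\mathcal{C}_{n-1}^{(\lambda)})']^2+(n+2\lambda-1)^2[\mathcal{C}_{n-1}^{(\lambda)}]^2$, and since $(n+2\lambda-1)^2-(n-1)^2=4\lambda(n+\lambda-1)$, the telescoping step matches $G_n-G_{n-1}$ exactly; the base case $F_1=G_1=4\lambda^2$ is right. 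Your handling of the two delicate points is also sound: division by $1-x^2$ is legitimate because the final identity is between polynomials, and the only scalar divisions are by powers of $\lambda$, which are nonzero on $I_G$ (including $(-\tfrac12,0)$). What your approach buys is transparency and self-containedness at the level of classical Gegenbauer recurrences; what Dette's framework buys is generality, since his identity holds for broad classes of orthogonal polynomials, with the ultraspherical case only one instance. One presentational caution: the relation $F_n=(1-x^2)[(\mathcal{C}_{n-1}^{(\lambda)})']^2+(n+2\lambda-1)^2[\mathcal{C}_{n-1}^{(\lambda)}]^2$ is really the whole theorem in disguise (it shifts the index of the derivative term down by one), so you could streamline the write-up by stating it as the inductive claim directly rather than routing through $F_n-F_{n-1}$.
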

Our main theorem is as follows and we will use it to derive the asymptotic behavior of $\|\mathcal{C}_{n}^{(\lambda)}\|^2_2$.
\begin{theorem}[Main Result]\label{thm_L2norm}
	The  Gegenbauer polynomials satisfy for $\lambda\in I_G$ and $n>1$:
	\begin{equation*}
	\big\|\mathcal{C}_{n-2}^{(\lambda+1)}\big\|_2^2=\frac{n^2-2\lambda n}{2^4\lambda^3}\big[\mathcal{C}_{n}^{(\lambda)}(1)\big]^2+\frac{ n(2n+1)}{2^3\lambda^2}\big\|\mathcal{C}_{n}^{(\lambda)}\big\|_2^2-\sum_{k=0}^{n-1}\frac{\lambda+k}{2^2\lambda^2}\big\|\mathcal{C}_{k}^{(\lambda)}\big\|_2^2.
	\end{equation*}
\end{theorem}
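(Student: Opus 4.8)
The plan is to descend from index $\lambda+1$ to index $\lambda$ by means of the classical derivative relation $\tfrac{\mathrm{d}}{\mathrm{d}x}\mathcal{C}_m^{(\lambda)}=2\lambda\,\mathcal{C}_{m-1}^{(\lambda+1)}$, which turns $\|\mathcal{C}_{n-2}^{(\lambda+1)}\|_2^2$ into $\tfrac{1}{4\lambda^2}\int_0^1[(\mathcal{C}_{n-1}^{(\lambda)})'(x)]^2\dx$ and the Dette term $(1-x^2)[\mathcal{C}_{n-1}^{(\lambda+1)}]^2$ into $\tfrac{1}{4\lambda^2}(1-x^2)[(\mathcal{C}_n^{(\lambda)})']^2$. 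Integrating the identity \eqref{eq_Dette} (with its free parameter equal to $n$) over $[0,1]$ then gives
$$\sum_{k=0}^{n-1}\frac{\lambda+k}{\lambda}\|\mathcal{C}_k^{(\lambda)}\|_2^2=\frac{n^2}{4\lambda^2}\|\mathcal{C}_n^{(\lambda)}\|_2^2+\frac{1}{4\lambda^2}\int_0^1(1-x^2)\big[(\mathcal{C}_n^{(\lambda)})'(x)\big]^2\dx ,$$
which is the source of the sum in the theorem. Thus the whole problem reduces to expressing the two integrals of $[(\mathcal{C}_n^{(\lambda)})']^2$ and $[(\mathcal{C}_{n-1}^{(\lambda)})']^2$ over $[0,1]$ through $[\mathcal{C}_n^{(\lambda)}(1)]^2$ and $\|\mathcal{C}_n^{(\lambda)}\|_2^2$.

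First I would dispose of the weighted integral. Integrating by parts and using the Gegenbauer equation in the form $\tfrac{\mathrm{d}}{\mathrm{d}x}[(1-x^2)(\mathcal{C}_n^{(\lambda)})']=(2\lambda-1)x(\mathcal{C}_n^{(\lambda)})'-n(n+2\lambda)\mathcal{C}_n^{(\lambda)}$, together with the elementary $\int_0^1 x(\mathcal{C}_n^{(\lambda)})'\mathcal{C}_n^{(\lambda)}\dx=\tfrac12\big([\mathcal{C}_n^{(\lambda)}(1)]^2-\|\mathcal{C}_n^{(\lambda)}\|_2^2\big)$, produces
$$\int_0^1(1-x^2)\big[(\mathcal{C}_n^{(\lambda)})'\big]^2\dx=-\frac{2\lambda-1}{2}\big[\mathcal{C}_n^{(\lambda)}(1)\big]^2+\Big(\frac{2\lambda-1}{2}+n(n+2\lambda)\Big)\|\mathcal{C}_n^{(\lambda)}\|_2^2 .$$
The boundary term at $x=0$ vanishes because $\mathcal{C}_n^{(\lambda)}$ has parity $(-1)^n$, so $(\mathcal{C}_n^{(\lambda)})'(0)\,\mathcal{C}_n^{(\lambda)}(0)=0$, while at $x=1$ only the value $\mathcal{C}_n^{(\lambda)}(1)$ survives. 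This step is routine.

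The decisive ingredient is a mixed ladder identity relating the two families in exactly the degrees that occur,
$$(\mathcal{C}_{n-1}^{(\lambda)})'(x)=x\,(\mathcal{C}_n^{(\lambda)})'(x)-n\,\mathcal{C}_n^{(\lambda)}(x),$$
which I would obtain by solving $(1-x^2)(\mathcal{C}_n^{(\lambda)})'=-nx\,\mathcal{C}_n^{(\lambda)}+(n+2\lambda-1)\mathcal{C}_{n-1}^{(\lambda)}$ for $\mathcal{C}_{n-1}^{(\lambda)}$, differentiating, and simplifying once more with the differential equation; this is exactly the point where the index $n$, rather than $n-1$, is injected into the final statement. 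Squaring and integrating over $[0,1]$, and again using the clean value of $\int_0^1 x(\mathcal{C}_n^{(\lambda)})'\mathcal{C}_n^{(\lambda)}\dx$, gives
$$4\lambda^2\,\|\mathcal{C}_{n-2}^{(\lambda+1)}\|_2^2=\int_0^1 x^2\big[(\mathcal{C}_n^{(\lambda)})'\big]^2\dx-n\big[\mathcal{C}_n^{(\lambda)}(1)\big]^2+(n^2+n)\,\|\mathcal{C}_n^{(\lambda)}\|_2^2 .$$

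Finally I would assemble the pieces by writing $x^2=1-(1-x^2)$, so that the remaining integral splits into the weighted one evaluated above and the unweighted $\int_0^1[(\mathcal{C}_n^{(\lambda)})']^2\dx$; substituting, collecting the coefficients of $[\mathcal{C}_n^{(\lambda)}(1)]^2$, $\|\mathcal{C}_n^{(\lambda)}\|_2^2$ and of the Dette sum, and simplifying, should return the stated formula. The main obstacle is precisely this last accounting: the mixed identity only links the \emph{unweighted} derivative integrals of consecutive degrees, so one is left with $\int_0^1[(\mathcal{C}_n^{(\lambda)})']^2\dx$, a quantity not directly reachable from the (weighted) Dette identity. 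I expect to resolve it by running the mixed identity as a recursion in the degree $m$, whose successive increments $\int_0^1[(\mathcal{C}_m^{(\lambda)})']^2\dx-\int_0^1[(\mathcal{C}_{m-1}^{(\lambda)})']^2\dx$ are given in closed form by the weighted evaluation, and which telescopes down to the trivial base $\mathcal{C}_0^{(\lambda)}$; the telescoped expression then has to be matched to the Dette sum, for which the evaluation of \eqref{eq_Dette} at $x=1$, namely $\sum_{k=0}^{n-1}(\lambda+k)[\mathcal{C}_k^{(\lambda)}(1)]^2=\tfrac{n^2}{4\lambda}[\mathcal{C}_n^{(\lambda)}(1)]^2$, is what collapses the boundary contributions into the single term $[\mathcal{C}_n^{(\lambda)}(1)]^2$ with the exact coefficient appearing in the theorem.
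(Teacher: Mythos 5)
Your preparatory identities are all correct, and in fact two of them reproduce ingredients of the paper's own proof: integrating Dette's identity \eqref{eq_Dette} over $[0,1]$ is exactly how the paper converts the weighted term $\|\sqrt{1-x^2}\,\mathcal{C}_{n-1}^{(\lambda+1)}\|_2^2$ into the sum $\sum_k\frac{\lambda+k}{\lambda}\|\mathcal{C}_k^{(\lambda)}\|_2^2$, and your squared ladder identity, rewritten with $(\mathcal{C}_{n}^{(\lambda)})'=2\lambda\,\mathcal{C}_{n-1}^{(\lambda+1)}$ from \eqref{eq_chebyshevT_basics}, \emph{is} Lemma \ref{lem_GegenbauerXSquared} of the paper. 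The ODE evaluation of $\int_0^1(1-x^2)[(\mathcal{C}_n^{(\lambda)})']^2\dx$ is also right. Combining your four pieces gives exactly the one-step recursion
\begin{equation*}
4\lambda^2\Big(\big\|\mathcal{C}_{m-1}^{(\lambda+1)}\big\|_2^2-\big\|\mathcal{C}_{m-2}^{(\lambda+1)}\big\|_2^2\Big)=\frac{2m-2\lambda+1}{2}\big[\mathcal{C}_{m}^{(\lambda)}(1)\big]^2+\frac{(2\lambda-1)(2m+1)}{2}\big\|\mathcal{C}_{m}^{(\lambda)}\big\|_2^2,
\end{equation*}
and up to here everything checks out numerically.

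The gap is the final ``accounting'' step, and it is not a routine one. Dette-type input collapses only sums carrying the weight \emph{exactly} $\lambda+m$: evaluation at $x=1$ gives $\sum_{k<n}(\lambda+k)[\mathcal{C}_k^{(\lambda)}(1)]^2=\frac{n^2}{4\lambda}[\mathcal{C}_n^{(\lambda)}(1)]^2$, and your integrated version handles $\sum_{k<n}(\lambda+k)\|\mathcal{C}_k^{(\lambda)}\|_2^2$. But the coefficients produced by telescoping your recursion are not Dette weights:
\begin{equation*}
\frac{2m-2\lambda+1}{2}=(m+\lambda)-\frac{4\lambda-1}{2},\qquad\quad
\frac{(2\lambda-1)(2m+1)}{2}=(2\lambda-1)(m+\lambda)-\frac{(2\lambda-1)^2}{2},
\end{equation*}
so after the collapse you are left with the residual \emph{unweighted} sums $\frac{4\lambda-1}{2}\sum_m[\mathcal{C}_m^{(\lambda)}(1)]^2$ and $\frac{(2\lambda-1)^2}{2}\sum_m\|\mathcal{C}_m^{(\lambda)}\|_2^2$. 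The two constants never vanish simultaneously for $\lambda\in I_G$, and (outside special cases such as $\lambda=\tfrac12$, where $[\mathcal{C}_m^{(1/2)}(1)]^2=1$) neither residual sum is reachable by any tool in your list; $\sum_m\|\mathcal{C}_m^{(\lambda)}\|_2^2$ is at least as hard as the quantity being computed. So the claimed collapse ``into the single term $[\mathcal{C}_n^{(\lambda)}(1)]^2$ with the exact coefficient of the theorem'' does not occur. This is precisely the structural point the paper's proof is built around: its Lemma \ref{lem_GegenbauerSumDiff} provides a \emph{two-step} (parity-preserving) recursion in which both coefficients are proportional to the Dette weight $n+\lambda$, namely $\frac{n+\lambda}{2\lambda^2}\big([\mathcal{C}_n^{(\lambda)}(1)]^2+(2\lambda-1)\|\mathcal{C}_n^{(\lambda)}\|_2^2\big)$; telescoping that recursion therefore matches Dette's identity exactly (Lemma \ref{lem_GegenbauerSumSquares}), and subtracting Lemma \ref{lem_GegenbauerXSquared} --- your identity --- finishes. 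Note also that you cannot obtain the two-step recursion simply by adding two consecutive instances of your one-step one, since that injects data of degree $n+1$; closing your argument honestly requires a further independent relation (for instance, differencing consecutive instances of your integrated-Dette identity yields a relation between $\|\mathcal{C}_{n+1}^{(\lambda)}\|_2^2$ and $\|\mathcal{C}_n^{(\lambda)}\|_2^2$, and one must then verify a non-obvious cancellation of the norm coefficients). That missing identity, or the $(\star)$-based Lemma \ref{lem_GegenbauerSumDiff} of the paper, is the idea your proposal lacks.
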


\begin{corollary}\label{cor_maintheorem}
	Let $\mathcal{B}(x,y)$ denote the beta function. The following asymptotic formulas in $n$ hold for $\lambda\in(0,1)$ and $\delta(\lambda):=\max\{4\lambda-1,2\lambda\}$:
	\begin{align*}
	\big\|\mathcal{C}_{n}^{(\lambda)}\big\|_2^2&<\	\mathcal{B}\big(1-\lambda,\tfrac{1}{2}\big)\frac{2^{1-2\lambda}}{\Gamma(\lambda)^2}\frac{1}{n^{2-2\lambda}},\\
	\big\|\mathcal{C}_{n}^{(\lambda+1)}\big\|_2^2&=\frac{n^{4\lambda}}{4\lambda\Gamma(2\lambda+1)^2}+O\big(n^{\delta(\lambda)}\big),\\
	\big\|\sqrt{1-x^2}\ \mathcal{C}_{n-1}^{(\lambda+1)}\big\|_2^2&<\frac{\	\mathcal{B}\big(1-\lambda,\tfrac{1}{2}\big)}{\Gamma(\lambda+1)^2}\frac{n^{2\lambda}}{2^{1+2\lambda}}+O\big(n^{\delta(\lambda)-1}\big).
	\end{align*}
	The following asymptotic formulas hold for $\lambda>1$:
	\begin{align*}
	\big\|\mathcal{C}_{n-2}^{(\lambda+1)}\big\|_2^2&=\frac{n^{4\lambda}}{4\lambda\Gamma(2\lambda+1)^2}+
	\frac{\lambda-1}{\Gamma(2\lambda+1)^2}n^{4\lambda-1}
	+O(n^{4\lambda-2}),\\
	\big\|\sqrt{1-x^2}\ \mathcal{C}_{n-1}^{(\lambda+1)}\big\|_2^2&=\frac{2\lambda-1}{4(\lambda-1)\Gamma(2\lambda+1)^2}n^{4\lambda-2}+O(n^{\delta(\lambda-1)+2}).
	\end{align*}
\end{corollary}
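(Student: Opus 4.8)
The plan is to derive all five estimates from the Main Theorem (Theorem~\ref{thm_L2norm}) and Dette's identity \eqref{eq_Dette}, fed by three standard inputs: the closed form $\mathcal{C}_n^{(\lambda)}(1)=\Gamma(n+2\lambda)/(\Gamma(2\lambda)\Gamma(n+1))$ with its two-term ratio-of-Gammas expansion; the Wendel--Gautschi inequality $\Gamma(n+s)/\Gamma(n)\le n^{s}$ valid for $0<s<1$; and the substitution $x=\cos\theta$, under which $\|f\|_2^2=\int_0^{\pi/2}f(\cos\theta)^2\sin\theta\,\mathrm d\theta$ and $\int_0^{\pi/2}(\sin\theta)^{1-2\lambda}\,\mathrm d\theta=\tfrac12\mathcal{B}(1-\lambda,\tfrac12)$, the last integral converging exactly when $\lambda<1$ --- which is the structural reason the regimes $\lambda\in(0,1)$ and $\lambda>1$ behave differently.

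For the first estimate I would substitute $x=\cos\theta$ and insert Szeg\H{o}'s pointwise envelope $(\sin\theta)^{\lambda}|\mathcal{C}_n^{(\lambda)}(\cos\theta)|\le\tfrac{2^{1-\lambda}}{\Gamma(\lambda)}\tfrac{\Gamma(n+\lambda)}{\Gamma(n+1)}$ (valid for $0<\lambda<1$); the leftover integral is the beta integral above, and Wendel--Gautschi bounds $(\Gamma(n+\lambda)/\Gamma(n+1))^2$ above by $n^{2\lambda-2}$, the strictness coming from the oscillatory factor not saturating the envelope on a set of full measure. The third estimate is the same computation one derivative higher: since $\tfrac{\mathrm d}{\mathrm d\theta}\mathcal{C}_n^{(\lambda)}(\cos\theta)=-2\lambda\sin\theta\,\mathcal{C}_{n-1}^{(\lambda+1)}(\cos\theta)$, the quantity $\sqrt{1-x^2}\,\mathcal{C}_{n-1}^{(\lambda+1)}$ is (up to the factor $2\lambda$) the conjugate function of $\mathcal{C}_n^{(\lambda)}$, whose envelope is $\tfrac{2^{-\lambda}n^{\lambda}}{\Gamma(\lambda+1)}(\sin\theta)^{-\lambda}$; squaring, integrating against $\sin\theta$, and using the same beta integral yields exactly $\tfrac{\mathcal{B}(1-\lambda,1/2)}{\Gamma(\lambda+1)^2}\tfrac{n^{2\lambda}}{2^{1+2\lambda}}$.

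The second estimate follows by specializing Theorem~\ref{thm_L2norm} at $n\mapsto n+2$: expanding $[\mathcal{C}_{n+2}^{(\lambda)}(1)]^2$ makes its first term equal $\tfrac{n^{4\lambda}}{4\lambda\Gamma(2\lambda+1)^2}+O(n^{4\lambda-1})$, while the first estimate with $\sum_{k\le n}k^{2\lambda-1}=O(n^{2\lambda})$ bounds the other two terms by $O(n^{2\lambda})$; since $\delta(\lambda)=\max\{4\lambda-1,2\lambda\}$, these collect correctly. For $\lambda>1$ I would first prove the a priori bound $\|\mathcal{C}_n^{(\lambda)}\|_2^2=O(n^{4\lambda-4})$ by induction on $\lfloor\lambda\rfloor$, with base case $\lambda\in(1,2)$ given by the second estimate (read with parameter $\lambda-1$) and inductive step again from Theorem~\ref{thm_L2norm}. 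Granting this, the fourth estimate is immediate: the two-term expansion of the theorem's first term produces exactly $\tfrac{n^{4\lambda}}{4\lambda\Gamma(2\lambda+1)^2}+\tfrac{\lambda-1}{\Gamma(2\lambda+1)^2}n^{4\lambda-1}$, and the a priori bound pins the remaining two terms at $O(n^{4\lambda-2})$; equivalently $\|\mathcal{C}_n^{(\lambda)}\|_2^2\sim c_\lambda n^{4\lambda-4}$ with $c_\lambda=\tfrac1{4(\lambda-1)\Gamma(2\lambda-1)^2}$ for all $\lambda>1$. The fifth estimate then comes from integrating \eqref{eq_Dette} over $[0,1]$, giving $\|\sqrt{1-x^2}\,\mathcal{C}_{n-1}^{(\lambda+1)}\|_2^2=\sum_{k=0}^{n-1}\tfrac{\lambda+k}{\lambda}\|\mathcal{C}_k^{(\lambda)}\|_2^2-\tfrac{n^2}{4\lambda^2}\|\mathcal{C}_n^{(\lambda)}\|_2^2$; inserting $\|\mathcal{C}_k^{(\lambda)}\|_2^2\sim c_\lambda k^{4\lambda-4}$, the power sum contributes $\tfrac{c_\lambda}{2\lambda(2\lambda-1)}n^{4\lambda-2}$ and the subtracted term $\tfrac{c_\lambda}{4\lambda^2}n^{4\lambda-2}$, whose difference simplifies to the stated $\tfrac{2\lambda-1}{4(\lambda-1)\Gamma(2\lambda+1)^2}n^{4\lambda-2}$.

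The step I expect to be most delicate is the pointwise control behind the two strict inequalities. Szeg\H{o}'s envelope is classical only for indices in $(0,1)$, so the weighted bound at index $\lambda+1$ must be routed either through the conjugate function of the index-$\lambda$ polynomial or through a separate envelope valid on $(1,2)$; in either case the endpoint zone $\theta\lesssim 1/n$, where the oscillatory description degenerates, has to be handled separately and shown to contribute only $O(n^{\delta(\lambda)-1})$ --- precisely the remainders recorded in the first-regime formulas. The rest is bookkeeping: carrying the ratio-of-Gamma expansions to the correct order and checking that every propagated remainder lands strictly below the advertised exponent.
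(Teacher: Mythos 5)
Your handling of the second, fourth and fifth formulas is essentially sound and close in spirit to the paper: the second and fourth come from Theorem~\ref{thm_L2norm} plus the two-term expansion of $[\mathcal{C}_{n}^{(\lambda)}(1)]^2$ (your coefficient arithmetic checks out, e.g.\ $\tfrac{2\lambda(2\lambda-2)}{16\lambda^3\Gamma(2\lambda)^2}=\tfrac{\lambda-1}{\Gamma(2\lambda+1)^2}$), and your fifth estimate, obtained by integrating Dette's identity \eqref{eq_Dette} directly and inserting $\|\mathcal{C}_k^{(\lambda)}\|_2^2\sim \tfrac{k^{4\lambda-4}}{4(\lambda-1)\Gamma(2\lambda-1)^2}$, reproduces the stated constant and is a clean alternative to the paper's route through Corollary~\ref{cor_OneMinusXsquaredNorm}. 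Two gaps remain, one serious. The serious one is the third estimate. You propose to bound $\sqrt{1-x^2}\,\mathcal{C}_{n-1}^{(\lambda+1)}$ pointwise by viewing it as $-\tfrac{1}{2\lambda}\tfrac{\mathrm{d}}{\mathrm{d}\theta}\mathcal{C}_{n}^{(\lambda)}(\cos\theta)$ and asserting the envelope $\tfrac{2^{-\lambda}n^{\lambda}}{\Gamma(\lambda+1)}(\sin\theta)^{-\lambda}$; but a sup-norm envelope for a function gives no pointwise control of its derivative, and Szeg\H{o}'s inequality \cite[Eq.~7.33.5]{Szego} is only available for indices in $(0,1)$, so there is nothing in your toolkit that yields this bound. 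Making it rigorous would require the Darboux-type asymptotics with explicit error terms plus a separate treatment of the zone $\theta\lesssim 1/n$ --- work you flag as ``delicate'' but do not carry out. The paper sidesteps all of this: Corollary~\ref{cor_OneMinusXsquaredNorm} is an \emph{exact} identity expressing $\|\sqrt{1-x^2}\,\mathcal{C}_{n-1}^{(\lambda+1)}\|_2^2$ through $[\mathcal{C}_{n}^{(\lambda)}(1)]^2$, $\|\mathcal{C}_{n+1}^{(\lambda)}\|_2^2$ and $\|\mathcal{C}_{n}^{(\lambda)}\|_2^2$, so the third estimate follows from the already-proved first estimate and \eqref{eq_asymptoticGegenbauer1} by pure bookkeeping (the leading coefficient emerges from $\tfrac{2n^2}{2^3\lambda^2}\cdot\mathcal{B}(1-\lambda,\tfrac12)\tfrac{2^{1-2\lambda}}{\Gamma(\lambda)^2}n^{2\lambda-2}$), with no pointwise information about the index-$(\lambda+1)$ polynomial needed.

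The minor gap: your induction for the a priori bound $\|\mathcal{C}_n^{(\lambda)}\|_2^2=O(n^{4\lambda-4})$, $\lambda>1$, takes as base case only $\lambda\in(1,2)$ via the second estimate at parameter $\lambda-1\in(0,1)$. This never covers integer $\lambda$: at $\lambda=2$ you would need the second estimate at parameter $1$, which fails (the beta integral $\int_0^{\pi/2}(\sin\theta)^{1-2\lambda}\,\mathrm{d}\theta$ diverges as $\lambda\to1$). The paper handles the integer ladder separately, seeding it with Lemma~\ref{lem_Ferizovic} for $\lambda=2$ (equivalently one can use the logarithmic identity $2\|\mathcal{C}_n^{(1)}\|_2^2=\psi(n+\tfrac32)+\gamma+\log 4$); your argument needs the same supplement before the induction on $\lfloor\lambda\rfloor$ closes.
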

The identity $2\cdot\|\mathcal{C}_{n}^{(1)}\|_2^2=\psi(n+\tfrac{3}{2})+\gamma+\log(4)$ is given by  \cite[Eq. 14]{Ferizovic}. 
  \section{Ingredients for the Proof of the Theorem}
  In this section we collect known results concerning Gegenbauer polynomials for later reference and the reader's 
  convenience, and  we derive some technical lemmas in Subsection \ref{subsec_ident}  to prove Theorem \ref{thm_L2norm}. To avoid repetition, we will assume $\lambda\in I_G$ for the rest of the text if not stated otherwise.
  Note first that
  \begin{equation}\label{eq_chebyshevT_basics}
  \begin{array}{rll}
  \dfrac{\mathrm{d}}{\mathrm{d}x}\mathcal{C}_{n+1}^{(\lambda)}(x)&=2\lambda\ \mathcal{C}_{n}^{(\lambda+1)}(x)& \mbox{ \cite[Eq. 8.935]{Gradshteyn}},\\
  \mathcal{C}_{n}^{(\lambda)}(1)&=\frac{\Gamma(n+2\lambda)}{\Gamma(2\lambda)n!}=%\binom{2\lambda+n-1}{n}
  \frac{\prod_{j=1}^{n}(2\lambda+n-j)}{n!}& \mbox{ \cite[Eq. 8.937]{Gradshteyn}};\\
  \end{array}
  \end{equation}
and $\mathcal{C}_{n}^{(\lambda)}(1)$ is the maximum on [-1,1] for $\lambda>0$ by \cite[Eq. 7.33.1]{Szego}. Also, by \eqref{eq_chebyshevT_basics}:
\begin{align}
		(n+2)\ \mathcal{C}_{n+2}^{(\lambda)}(x)&=2\lambda\Big( x\ \mathcal{C}_{n+1}^{(\lambda+1)}(x)-\mathcal{C}_{n}^{(\lambda+1)}(x)\Big)&\mbox{\cite[Eq. 8.933.2]{Gradshteyn}},\label{eq_GradshteynGegenbauerRecursion}\\
		(n+\lambda)\ \mathcal{C}_{n}^{(\lambda)}(x)&=\lambda\Big(\mathcal{C}_{n}^{(\lambda+1)}(x)-\mathcal{C}_{n-2}^{(\lambda+1)}(x)\Big)&\mbox{\cite[Eq. 8.939.6]{Gradshteyn}}.\label{eq_GegenbauerDiff}
\end{align}
%	\begin{equation}\label{eq_GradshteynGegenbauerRecursion}
%	(n+2)\mathcal{C}_{n+2}^{(\lambda)}(x)=2\lambda x\mathcal{C}_{n+1}^{(\lambda+1)}(x)-2\lambda\mathcal{C}_{n}^{(\lambda+1)}(x)\hspace{1cm}\mbox{\cite[Eq. 8.933.2]{Gradshteyn}};
%	\end{equation}
%	\begin{equation}\label{eq_GegenbauerDiff}
%	\lambda\mathcal{C}_{n}^{(\lambda+1)}(x)-\lambda\mathcal{C}_{n-2}^{(\lambda+1)}(x)=(n+\lambda)\mathcal{C}_{n}^{(\lambda)}(x)\hspace{1.2cm}\mbox{\cite[Eq. 8.939.6]{Gradshteyn}}.
%\end{equation}	
\subsection{Identities for Gegenbauer polynomials}\label{subsec_ident}
\begin{lemma}\label{lem_GegenbauerSumDiff}
	The Gegenbauer polynomials satisfy following identities:
	\begin{equation*}
		\begin{split}
		\mathcal{C}_{n}^{(\lambda+1)}(x)+\mathcal{C}_{n-2}^{(\lambda+1)}(x)&=2x\ \mathcal{C}_{n-1}^{(\lambda+1)}(x)+\mathcal{C}_{n}^{(\lambda)}(x),\hspace{2,4cm}(\star)\\
		\int_0^1\big[\mathcal{C}_{n}^{(\lambda+1)}(x)\big]^2-\big[\mathcal{C}_{n-2}^{(\lambda+1)}(x)\big]^2\dx&=\frac{n+\lambda}{2\lambda^2}\Big(\big[\mathcal{C}_{n}^{(\lambda)}(1)\big]^2+	(2\lambda-1)\big\|\mathcal{C}_{n}^{(\lambda)}\big\|_2^2\Big).\\
		\end{split}
	\end{equation*}
\end{lemma}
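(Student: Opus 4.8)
The plan is to establish the purely algebraic identity $(\star)$ first, and then feed it---together with \eqref{eq_GegenbauerDiff}---into the integral by factoring a difference of squares and recognizing the resulting cross term as a total derivative.

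For $(\star)$, I would start from the two three-term relations already recorded. Replacing $n$ by $n-2$ in \eqref{eq_GradshteynGegenbauerRecursion} gives $n\,\mathcal{C}_n^{(\lambda)}(x)=2\lambda\big(x\,\mathcal{C}_{n-1}^{(\lambda+1)}(x)-\mathcal{C}_{n-2}^{(\lambda+1)}(x)\big)$, so that $2x\,\mathcal{C}_{n-1}^{(\lambda+1)}=\tfrac{n}{\lambda}\mathcal{C}_n^{(\lambda)}+2\,\mathcal{C}_{n-2}^{(\lambda+1)}$. Adding $\mathcal{C}_n^{(\lambda)}$ to both sides collapses the $\mathcal{C}_n^{(\lambda)}$-terms into $\tfrac{n+\lambda}{\lambda}\mathcal{C}_n^{(\lambda)}$, which \eqref{eq_GegenbauerDiff} identifies with $\mathcal{C}_n^{(\lambda+1)}-\mathcal{C}_{n-2}^{(\lambda+1)}$; the leftover $+2\,\mathcal{C}_{n-2}^{(\lambda+1)}$ then rebuilds the sum $\mathcal{C}_n^{(\lambda+1)}+\mathcal{C}_{n-2}^{(\lambda+1)}$, which is exactly $(\star)$. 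This step is pure bookkeeping between the two known recurrences.

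For the integral, I would factor the integrand as a sum times a difference, $\big[\mathcal{C}_n^{(\lambda+1)}\big]^2-\big[\mathcal{C}_{n-2}^{(\lambda+1)}\big]^2=\big(\mathcal{C}_n^{(\lambda+1)}+\mathcal{C}_{n-2}^{(\lambda+1)}\big)\big(\mathcal{C}_n^{(\lambda+1)}-\mathcal{C}_{n-2}^{(\lambda+1)}\big)$, then substitute $(\star)$ into the first factor and \eqref{eq_GegenbauerDiff} into the second. This yields $\tfrac{n+\lambda}{\lambda}\mathcal{C}_n^{(\lambda)}\big(2x\,\mathcal{C}_{n-1}^{(\lambda+1)}+\mathcal{C}_n^{(\lambda)}\big)$, whose integral over $[0,1]$ splits into the term $\tfrac{n+\lambda}{\lambda}\|\mathcal{C}_n^{(\lambda)}\|_2^2$ and a cross term $\tfrac{n+\lambda}{\lambda}\cdot 2\int_0^1 x\,\mathcal{C}_n^{(\lambda)}\,\mathcal{C}_{n-1}^{(\lambda+1)}\dx$.

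The crux---the one step that is more than bookkeeping---is recognizing that this cross term is a total derivative in disguise. The derivative formula in \eqref{eq_chebyshevT_basics} gives $2\lambda\,\mathcal{C}_{n-1}^{(\lambda+1)}=\tfrac{\mathrm{d}}{\mathrm{d}x}\mathcal{C}_n^{(\lambda)}$, so $2\int_0^1 x\,\mathcal{C}_n^{(\lambda)}\,\mathcal{C}_{n-1}^{(\lambda+1)}\dx=\tfrac{1}{2\lambda}\int_0^1 x\,\tfrac{\mathrm{d}}{\mathrm{d}x}\big[\mathcal{C}_n^{(\lambda)}\big]^2\dx$. Integrating by parts turns this into $\tfrac{1}{2\lambda}\big(\big[\mathcal{C}_n^{(\lambda)}(1)\big]^2-\|\mathcal{C}_n^{(\lambda)}\|_2^2\big)$, the boundary contribution at $x=1$ supplying precisely the $\big[\mathcal{C}_n^{(\lambda)}(1)\big]^2$ appearing in the claim. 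Collecting the pieces and simplifying the coefficient of $\|\mathcal{C}_n^{(\lambda)}\|_2^2$ to $\tfrac{2\lambda-1}{2\lambda}$ gives the stated formula. I expect no genuine obstacle beyond spotting the integration-by-parts reduction; the only real risk is arithmetic, in correctly tracking the $\tfrac{n+\lambda}{\lambda}$ and $\tfrac{1}{2\lambda}$ prefactors.
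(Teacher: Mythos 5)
Your proposal is correct and follows essentially the same route as the paper: both derive $(\star)$ by combining \eqref{eq_GegenbauerDiff} with the shifted recurrence \eqref{eq_GradshteynGegenbauerRecursion}, then factor the difference of squares, substitute $(\star)$ and \eqref{eq_GegenbauerDiff}, rewrite the cross term as $\tfrac{x}{2\lambda}\tfrac{\mathrm{d}}{\mathrm{d}x}\big[\mathcal{C}_{n}^{(\lambda)}(x)\big]^2$ via \eqref{eq_chebyshevT_basics}, and finish by integration by parts. No gaps; the bookkeeping of the prefactors is also correct.
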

\begin{proof}
  First we use \eqref{eq_GegenbauerDiff}; then apply \eqref{eq_GradshteynGegenbauerRecursion} to the right-hand side below proving  $(\star)$:
	\begin{equation*}
	\begin{split}
	\mathcal{C}_{n}^{(\ell)}(x)+\mathcal{C}_{n-2}^{(\ell)}(x)&=\frac{n+\lambda}{\lambda}\mathcal{C}_{n}^{(\lambda)}(x)+2x\ \mathcal{C}_{n-1}^{(\ell)}(x)-\Big(2x\ \mathcal{C}_{n-1}^{(\ell)}(x)- 2\mathcal{C}_{n-2}^{(\ell)}(x)\Big),
	\end{split}
	\end{equation*}
	where $\ell:=\lambda+1$.
		Next we obtain by the binomial theorem with \eqref{eq_GegenbauerDiff}, $(\star)$ and \eqref{eq_chebyshevT_basics}
		\begin{equation*}
		\begin{split}
		\big[\mathcal{C}_{n}^{(\lambda+1)}(x)\big]^2-\big[\mathcal{C}_{n-2}^{(\lambda+1)}(x)\big]^2&=\frac{n+\lambda}{\lambda}\mathcal{C}_{n}^{(\lambda)}(x)\Big(2x\mathcal{C}_{n-1}^{(\lambda+1)}(x)+\mathcal{C}_{n}^{(\lambda)}(x)\Big)\\
		&=\frac{n+\lambda}{\lambda}\Big(\frac{x}{2\lambda} \frac{\mathrm{d}}{\mathrm{d}x}\big[\mathcal{C}_{n}^{(\lambda)}(x)\big]^2+\big[\mathcal{C}_{n}^{(\lambda)}(x)\big]^2\Big).
		\end{split}
	\end{equation*}
 Integration by parts then finishes the argument. 
\end{proof}

\begin{lemma}\label{lem_GegenbauerSumSquares}
	The Gegenbauer polynomials satisfy the following identity:
		\begin{equation*}
	\begin{split}
	\int_0^1x^2\big[\mathcal{C}_{n+1}^{(\lambda+1)}(x)\big]^2+\big[\mathcal{C}_{n}^{(\lambda+1)}(x)\big]^2&\dx+\frac{1}{2\lambda}\int_0^1(1-x^2)\big[\mathcal{C}_{n+1}^{(\lambda+1)}(x)\big]^2\dx\\
	&\hspace{-2cm}=\frac{(n+2)^2}{8 \lambda^3}\left[\mathcal{C}_{n+2}^{(\lambda)}(1)\right]^2+\frac{2\lambda-1}{2\lambda}\frac{(n+2)^2}{4 \lambda^2}\big\|\mathcal{C}_{n+2}^{(\lambda)}\big\|_2^2.
	\end{split}
	\end{equation*}
\end{lemma}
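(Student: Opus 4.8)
The plan is to collapse the two index-$(\lambda+1)$ polynomials on the left into the single polynomial $R:=\mathcal{C}_{n+2}^{(\lambda)}$, whose derivative and differential equation then do all the work. Put $P:=\mathcal{C}_{n+1}^{(\lambda+1)}$, $Q:=\mathcal{C}_n^{(\lambda+1)}$ and $c:=\tfrac{n+2}{2\lambda}$. The derivative rule in \eqref{eq_chebyshevT_basics} gives $R'=2\lambda P$, and the recursion \eqref{eq_GradshteynGegenbauerRecursion}, namely $(n+2)R=2\lambda(xP-Q)$, gives $Q=xP-cR$. Substituting $P=\tfrac{1}{2\lambda}R'$ and $Q=xP-cR$ into the integrand $x^2P^2+Q^2+\tfrac1{2\lambda}(1-x^2)P^2$, the cross term of $Q^2$ is the only summand mixing $R$ and $R'$; since $xR'R=\tfrac12\,x\tfrac{d}{dx}R^2$, integration by parts (with vanishing contribution at $x=0$) evaluates it in closed form through $[\mathcal{C}_{n+2}^{(\lambda)}(1)]^2$ and $\|\mathcal{C}_{n+2}^{(\lambda)}\|_2^2$.

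Carrying out this substitution, and writing $J_2:=\int_0^1 x^2(R')^2\dx$ and $J_1:=\int_0^1(1-x^2)(R')^2\dx$, the left-hand side becomes
$$\frac{4\lambda J_2+J_1}{8\lambda^3}-\frac{c}{2\lambda}\Big([\mathcal{C}_{n+2}^{(\lambda)}(1)]^2-\|\mathcal{C}_{n+2}^{(\lambda)}\|_2^2\Big)+c^2\|\mathcal{C}_{n+2}^{(\lambda)}\|_2^2.$$
Thus everything reduces to the single identity
$$4\lambda J_2+J_1=\mu\Big([\mathcal{C}_{n+2}^{(\lambda)}(1)]^2-\|\mathcal{C}_{n+2}^{(\lambda)}\|_2^2\Big),\qquad \mu:=(n+2)(n+2+2\lambda).\quad(\dagger)$$
Indeed, once $(\dagger)$ is available the first term becomes $\tfrac{\mu}{8\lambda^3}(\cdots)$, and the constants combine by $\tfrac{\mu}{8\lambda^3}-\tfrac{c}{2\lambda}=\tfrac{(n+2)^2}{8\lambda^3}$, which together with the leftover $c^2\|\mathcal{C}_{n+2}^{(\lambda)}\|_2^2=\tfrac{(n+2)^2}{4\lambda^2}\|\mathcal{C}_{n+2}^{(\lambda)}\|_2^2$ reproduces verbatim the right-hand side of the lemma.

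To prove $(\dagger)$ I would use the classical Gegenbauer differential equation $(1-x^2)R''-(2\lambda+1)xR'+\mu R=0$ (see \cite{Szego}), multiply it by $xR'$, and integrate over $[0,1]$. Since $R''R'=\tfrac12((R')^2)'$ and $RR'=\tfrac12(R^2)'$, the three summands become $\tfrac12\int_0^1 x(1-x^2)((R')^2)'\dx$, $-(2\lambda+1)J_2$, and $\tfrac{\mu}{2}\int_0^1 x(R^2)'\dx$. The last equals $\tfrac{\mu}{2}\big([\mathcal{C}_{n+2}^{(\lambda)}(1)]^2-\|\mathcal{C}_{n+2}^{(\lambda)}\|_2^2\big)$, while integration by parts of the first gives $-\tfrac12\int_0^1(1-3x^2)(R')^2\dx$ because the endpoint weight $x(1-x^2)$ vanishes at both $0$ and $1$. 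Writing $\int_0^1(R')^2\dx=J_1+J_2$ and collecting the three contributions yields exactly $(\dagger)$.

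The only real obstacle is $(\dagger)$, and specifically the choice of multiplier. Testing the differential equation against $R'$ alone yields the weight $x$ together with stray boundary data $R(0),R'(0)$, and any direct attempt to reach the weight-$1$ integral $\int_0^1(R')^2\dx=4\lambda^2\|\mathcal{C}_{n+1}^{(\lambda+1)}\|_2^2$ reintroduces an unknown norm and stalls in a circular dependence. The multiplier $xR'$ is exactly the one that (i) produces the combination $4\lambda J_2+J_1$ of the two weights $x^2$ and $1-x^2$ that genuinely occur on the left, and (ii) carries the boundary factor $x(1-x^2)$ annihilating every endpoint term except the desired $[\mathcal{C}_{n+2}^{(\lambda)}(1)]^2$; once it is found, only routine bookkeeping of the constants $c$, $\mu$ and $2\lambda-1$ remains.
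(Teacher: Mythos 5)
Your proof is correct, but it takes a genuinely different route from the paper's. The paper proves this lemma by telescoping the second identity of Lemma \ref{lem_GegenbauerSumDiff} over even and odd degrees separately (hence its case split $n=2m$ versus $n+1=2m$) and then invoking Dette's identity \eqref{eq_Dette} twice --- once evaluated at $x=1$ to collapse $\sum_j \tfrac{j+\lambda}{\lambda}\big[\mathcal{C}_{j}^{(\lambda)}(1)\big]^2$ and once integrated over $[0,1]$ to collapse $\sum_j \tfrac{j+\lambda}{\lambda}\big\|\mathcal{C}_{j}^{(\lambda)}\big\|_2^2$. You instead eliminate both index-$(\lambda+1)$ polynomials outright via $R'=2\lambda P$ and $Q=xP-cR$, reducing the left-hand side to a quadratic expression in $R=\mathcal{C}_{n+2}^{(\lambda)}$ and $R'$ alone, and then close the argument with your virial-type identity $(\dagger)$, obtained by testing the classical ultraspherical differential equation $(1-x^2)R''-(2\lambda+1)xR'+\mu R=0$ against the multiplier $xR'$. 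I verified the details: the reduction of the left-hand side to $\tfrac{4\lambda J_2+J_1}{8\lambda^3}-\tfrac{c}{2\lambda}\big([R(1)]^2-\|R\|_2^2\big)+c^2\|R\|_2^2$, the constant bookkeeping $\tfrac{\mu}{8\lambda^3}-\tfrac{c}{2\lambda}=\tfrac{(n+2)^2}{8\lambda^3}$ and $c^2-\tfrac{(n+2)^2}{8\lambda^3}=\tfrac{2\lambda-1}{2\lambda}\tfrac{(n+2)^2}{4\lambda^2}$, and the integration by parts in $(\dagger)$ --- where the weight $x(1-x^2)$ kills both boundary terms and $1-3x^2=(1-x^2)-2x^2$ produces exactly $J_1+4\lambda J_2$ --- are all correct, and since only polynomials are manipulated, everything holds for all $\lambda\in I_G$. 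What your approach buys: it is self-contained, bypassing both Lemma \ref{lem_GegenbauerSumDiff} and Dette's theorem entirely, needing only \eqref{eq_chebyshevT_basics}, the recursion \eqref{eq_GradshteynGegenbauerRecursion}, and the standard second-order ODE (which the paper never states but is classical, see \cite{Szego}), and it avoids the parity case split. What the paper's route buys: it reuses machinery (Lemma \ref{lem_GegenbauerSumDiff} and Dette's formula) that the paper needs elsewhere anyway, so no new external ingredient is imported into the exposition.
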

\begin{proof}
	Let $n=2m$. By Lemma \ref{lem_GegenbauerSumDiff} and a telescoping sum argument:
		\begin{equation*}
		\begin{split}
		\big\|\mathcal{C}_{n}^{(\lambda+1)}\big\|^2_2-\big\|\mathcal{C}_{0}^{(\lambda+1)}\big\|^2_2&=\sum_{j=1}^m\frac{2j+\lambda}{2\lambda^2}\Big(\big[\mathcal{C}_{2j}^{(\lambda)}(1)\big]^2+	(2\lambda-1)\big\|\mathcal{C}_{2j}^{(\lambda)}\big\|_2^2\Big),\\
		\big\|\mathcal{C}_{n+1}^{(\lambda+1)}\big\|^2_2-\big\|\mathcal{C}_{1}^{(\lambda+1)}\big\|^2_2&=\sum_{j=1}^m\frac{2j+1+\lambda}{2\lambda^2}\Big(\big[\mathcal{C}_{2j+1}^{(\lambda)}(1)\big]^2	+	(2\lambda-1)\big\|\mathcal{C}_{2j+1}^{(\lambda)}\big\|_2^2\Big).
		\end{split}
\end{equation*}
	Using \eqref{eq_FirstGegenbauers} and summing up, and an application of Dette's result \eqref{eq_Dette} yields:
		\begin{equation*}
		\begin{split}
		\int_0^1&\big[\mathcal{C}_{n+1}^{(\lambda+1)}(x)\big]^2+\big[\mathcal{C}_{n}^{(\lambda+1)}(x)\big]^2\dx\\
		&=\frac{4}{3}(\lambda+1)^2+1+\frac{1}{2\lambda}\sum_{j=2}^{n+1}\frac{j+\lambda}{\lambda}\big[\mathcal{C}_{j}^{(\lambda)}(1)\big]^2+\frac{2\lambda-1}{2\lambda}\sum_{j=2}^{n+1}\frac{j+\lambda}{\lambda}\big\|\mathcal{C}_{j}^{(\lambda)}\big\|_2^2\\
		&=\frac{(n+2)^2}{8 \lambda^3}\big[\mathcal{C}_{n+2}^{(\lambda)}(1)\big]^2+\frac{2\lambda-1}{2\lambda}\sum_{j=0}^{n+1}\frac{j+\lambda}{\lambda}\big\|\mathcal{C}_{j}^{(\lambda)}\big\|_2^2\\
		&=\frac{(n+2)^2}{8 \lambda^3}\big[\mathcal{C}_{n+2}^{(\lambda)}(1)\big]^2+\frac{2\lambda-1}{2\lambda}\Big(\frac{(n+2)^2}{4 \lambda^2}\big\|\mathcal{C}_{n+2}^{(\lambda)}\big\|_2^2+\big\|\sqrt{1-x^2}\ \mathcal{C}_{n+1}^{(\lambda+1)}\big\|_2^2\Big).\\
		\end{split}
	\end{equation*}
	The case $n+1=2m$ is analogous.
\end{proof}

\begin{lemma}\label{lem_GegenbauerXSquared}
	The Gegenbauer polynomials satisfy the following identity:
		\begin{equation*}
	\int_0^1x^2\big[\mathcal{C}_{n+1}^{(\lambda+1)}(x)\big]^2-\big[\mathcal{C}_{n}^{(\lambda+1)}(x)\big]^2\dx=
	\frac{n+2}{4 \lambda^2}\Big(\big[\mathcal{C}_{n+2}^{(\lambda)}(1)\big]^2
	-(n+3)\big\|\mathcal{C}_{n+2}^{(\lambda)}\big\|_2^2\Big).
	\end{equation*}
%	\begin{equation*}
%	\big\|x\mathcal{C}_{n+1}^{(\lambda+1)}\big\|_2^2=\frac{n+2}{8\lambda^2}\Big(\big[\mathcal{C}_{n+2}^{(\lambda)}(1)\big]^2-\big\|\mathcal{C}_{n+2}^{(\lambda)}\big\|_2^2\Big)+\int_0^1x\mathcal{C}_{n+1}^{(\lambda+1)}(x)\mathcal{C}_{n}^{(\lambda+1)}(x)\dx.
%	\end{equation*}
	%	\begin{equation*}
	%	\begin{split}
	%	\int_0^1x^2\big[\mathcal{C}_{n+1}^{(\lambda+1)}(x)\big]^2&\dx=\frac{n+2}{8\lambda^2}\Big(\big[\mathcal{C}_{n+2}^{(\lambda)}(1)\big]^2-\int_0^1\big[\mathcal{C}_{n+2}^{(\lambda)}(x)\big]^2\dx\Big)\\
	%	&\hspace{2,4cm}+\int_0^1x\mathcal{C}_{n}^{(\lambda+1)}(x)\mathcal{C}_{n+1}^{(\lambda+1)}(x)\dx.\\
	%	\end{split}
	%	\end{equation*}
\end{lemma}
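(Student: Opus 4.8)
The plan is to eliminate the index-$(\lambda+1)$ polynomials in favour of the index-$\lambda$ quantities $\big[\mathcal{C}_{n+2}^{(\lambda)}(1)\big]^2$ and $\big\|\mathcal{C}_{n+2}^{(\lambda)}\big\|_2^2$ by exploiting the mixed recursion \eqref{eq_GradshteynGegenbauerRecursion}. First I would rewrite it as
$$x\,\mathcal{C}_{n+1}^{(\lambda+1)}(x)=\frac{n+2}{2\lambda}\mathcal{C}_{n+2}^{(\lambda)}(x)+\mathcal{C}_{n}^{(\lambda+1)}(x),$$
square both sides and subtract $\big[\mathcal{C}_{n}^{(\lambda+1)}\big]^2$, which yields the pointwise identity
$$x^2\big[\mathcal{C}_{n+1}^{(\lambda+1)}\big]^2-\big[\mathcal{C}_{n}^{(\lambda+1)}\big]^2=\frac{(n+2)^2}{4\lambda^2}\big[\mathcal{C}_{n+2}^{(\lambda)}\big]^2+\frac{n+2}{\lambda}\,\mathcal{C}_{n+2}^{(\lambda)}\mathcal{C}_{n}^{(\lambda+1)}.$$
Integrating over $[0,1]$ disposes of the first term and reduces the whole statement to evaluating the single mixed-index integral $I:=\int_0^1 \mathcal{C}_{n+2}^{(\lambda)}\mathcal{C}_{n}^{(\lambda+1)}\dx$.

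The hard part is $I$: the two factors carry different indices and different natural weights, so orthogonality is unavailable, and a direct integration by parts only ties $I$ to an equally unknown companion integral. The trick I would use is to reinsert the factor $x$. From the same recursion, $\mathcal{C}_{n}^{(\lambda+1)}=x\,\mathcal{C}_{n+1}^{(\lambda+1)}-\tfrac{n+2}{2\lambda}\mathcal{C}_{n+2}^{(\lambda)}$, so
$$\mathcal{C}_{n+2}^{(\lambda)}\mathcal{C}_{n}^{(\lambda+1)}=x\,\mathcal{C}_{n+2}^{(\lambda)}\mathcal{C}_{n+1}^{(\lambda+1)}-\frac{n+2}{2\lambda}\big[\mathcal{C}_{n+2}^{(\lambda)}\big]^2.$$
The derivative formula \eqref{eq_chebyshevT_basics} gives $2\lambda\,\mathcal{C}_{n+1}^{(\lambda+1)}=\frac{\mathrm{d}}{\mathrm{d}x}\mathcal{C}_{n+2}^{(\lambda)}$, so the cross term becomes a perfect derivative, $x\,\mathcal{C}_{n+2}^{(\lambda)}\mathcal{C}_{n+1}^{(\lambda+1)}=\frac{x}{4\lambda}\frac{\mathrm{d}}{\mathrm{d}x}\big[\mathcal{C}_{n+2}^{(\lambda)}\big]^2$. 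Integrating by parts, the boundary term at $0$ vanishes because of the explicit $x$ and at $1$ contributes $\big[\mathcal{C}_{n+2}^{(\lambda)}(1)\big]^2$, so I obtain the clean closed form
$$I=\frac{1}{4\lambda}\big[\mathcal{C}_{n+2}^{(\lambda)}(1)\big]^2-\frac{2n+5}{4\lambda}\big\|\mathcal{C}_{n+2}^{(\lambda)}\big\|_2^2.$$

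Finally I would substitute this back into the integrated identity and collect the two multiples of $\big\|\mathcal{C}_{n+2}^{(\lambda)}\big\|_2^2$; their coefficient simplifies via $(n+2)^2-(n+2)(2n+5)=-(n+2)(n+3)$, leaving exactly $\frac{n+2}{4\lambda^2}\big(\big[\mathcal{C}_{n+2}^{(\lambda)}(1)\big]^2-(n+3)\big\|\mathcal{C}_{n+2}^{(\lambda)}\big\|_2^2\big)$, as claimed. I expect the argument to rely only on \eqref{eq_chebyshevT_basics} and \eqref{eq_GradshteynGegenbauerRecursion}, with no recourse to Dette's identity or the previous lemmas; the sole non-mechanical move is the reinsertion of $x$ that turns the obstinate cross term into a total derivative.
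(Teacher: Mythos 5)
Your proposal is correct and follows essentially the same route as the paper: both arguments use only the recursion \eqref{eq_GradshteynGegenbauerRecursion} and the derivative formula \eqref{eq_chebyshevT_basics}, with the decisive step being the recognition of $x\,\mathcal{C}_{n+1}^{(\lambda+1)}\mathcal{C}_{n+2}^{(\lambda)}$ as $\frac{x}{4\lambda}\frac{\mathrm{d}}{\mathrm{d}x}\big[\mathcal{C}_{n+2}^{(\lambda)}\big]^2$ followed by integration by parts. The only cosmetic difference is bookkeeping: you expand the square of the recursion and evaluate the mixed integral $I$ in closed form, whereas the paper factors a difference of squares (its ``quadratic completion'') and ends with an equation in which the target integral appears on both sides and is then solved for.
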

\begin{proof}
	Note first that
	by \eqref{eq_GradshteynGegenbauerRecursion} and by quadratic completion
	\begin{equation}\label{eq_ProductGegenb}
	\begin{split}
		2\frac{n+2}{2\lambda}\mathcal{C}_{n+2}^{(\lambda)}(x)\mathcal{C}_{n}^{(\lambda+1)}(x)&=2x\mathcal{C}_{n+1}^{(\lambda+1)}(x)\mathcal{C}_{n}^{(\lambda+1)}(x)-2\big[\mathcal{C}_{n}^{(\lambda+1)}(x)\big]^2\\
	&\hspace{-2.5cm}=x^2\big[\mathcal{C}_{n+1}^{(\lambda+1)}(x)\big]^2-\big[\mathcal{C}_{n}^{(\lambda+1)}(x)\big]^2-\Big(x\mathcal{C}_{n+1}^{(\lambda+1)}(x)-\mathcal{C}_{n}^{(\lambda+1)}(x) \Big)^2.
	\end{split}
	\end{equation}
	Hence by the binomial theorem and again by \eqref{eq_GradshteynGegenbauerRecursion}
	\begin{equation*}
	\begin{split}
	&2\int_0^1x^2\big[\mathcal{C}_{n+1}^{(\lambda+1)}(x)\big]^2-\big[\mathcal{C}_{n}^{(\lambda+1)}(x)\big]^2\dx\\
	&=\frac{n+2}{\lambda}\int_0^1\Big(x\mathcal{C}_{n+1}^{(\lambda+1)}(x)+\mathcal{C}_{n}^{(\lambda+1)}(x)\Big)\mathcal{C}_{n+2}^{(\lambda)}(x)\dx\\
	&=\frac{n+2}{\lambda}\int_0^1\frac{x}{4\lambda}\frac{\mathrm{d}}{\mathrm{d}x}\big[\mathcal{C}_{n+2}^{(\lambda)}(x)\big]^2+\mathcal{C}_{n}^{(\lambda+1)}(x)\mathcal{C}_{n+2}^{(\lambda)}(x)\dx\\
	&=\frac{n+2}{4\lambda^2}\Big(\big[\mathcal{C}_{n+2}^{(\lambda)}(1)\big]^2-\int_0^1\big[\mathcal{C}_{n+2}^{(\lambda)}(x)\big]^2\dx\Big)+2\frac{n+2}{2\lambda}\int_0^1\mathcal{C}_{n}^{(\lambda+1)}(x)\mathcal{C}_{n+2}^{(\lambda)}(x)\dx\\
	%&=\frac{n+2}{8\lambda^2}\Big(\big[\mathcal{C}_{n+2}^{(\lambda)}(1)\big]^2-\big\|\mathcal{C}_{n+2}^{(\lambda)}\big\|_2^2\Big)+\int_0^1x\mathcal{C}_{n}^{(\lambda+1)}(x)\mathcal{C}_{n+1}^{(\lambda+1)}(x)-\big[\mathcal{C}_{n}^{(\lambda+1)}(x)\big]^2\dx\\
	\end{split}
	\end{equation*}
	which proves the result when we substitute \eqref{eq_ProductGegenb} and use \eqref{eq_GradshteynGegenbauerRecursion} one last time.
\end{proof}

%\begin{lemma}\label{lem_ProductGegenbauer}
%	The Gegenbauer polynomials satisfy following identity for $\lambda\neq0$:
%	\begin{equation*}
%	\begin{split}
%	&\int_0^12x\mathcal{C}_{n+1}^{(\lambda+1)}(x)\mathcal{C}_{n}^{(\lambda+1)}(x)\dx\\
%	&=\frac{(n+2)^2}{8 \lambda^3}\big[\mathcal{C}_{n+2}^{(\lambda)}(1)\big]^2-\frac{1}{2\lambda}\left(\frac{(n+2)^2}{4 \lambda^2}\big\|\mathcal{C}_{n+2}^{(\lambda)}\big\|_2^2+\big\|\sqrt{1-x^2}\mathcal{C}_{n+1}^{(\lambda+1)}\big\|_2^2\right).
%	\end{split}
%	\end{equation*}
%\end{lemma}
%\begin{proof}
%	Clearly we have 
%	\begin{equation*}
%		2x\mathcal{C}_{n+1}^{(\lambda+1)}\mathcal{C}_{n}^{(\lambda+1)}=x^2\big[\mathcal{C}_{n+1}^{(\lambda+1)}\big]^2+\big[\mathcal{C}_{n}^{(\lambda+1)}\big]^2-\Big(x\mathcal{C}_{n+1}^{(\lambda+1)}-\mathcal{C}_{n}^{(\lambda+1)} \Big)^2,\hspace{1cm} (\star\star)
%		\end{equation*}
%		thus by  Lemma \ref{lem_GegenbauerSumSquares} and by \eqref{eq_GradshteynGegenbauerRecursion} we obtain 
%%		\begin{equation*}
%%		\begin{split}
%%		\int_0^1(\star\star)\dx&=\frac{(n+2)^2}{8 \lambda^3}\big[\mathcal{C}_{n+2}^{(\lambda)}(1)\big]^2+\frac{2\lambda-1}{2\lambda}\frac{(n+2)^2}{4 \lambda^2}\big\|\mathcal{C}_{n+2}^{(\lambda)}\big\|_2^2\\
%%		&\hspace{1cm}-\frac{1}{2\lambda}\int_0^1(1-x^2)\big[\mathcal{C}_{n+1}^{(\lambda+1)}(x)\big]^2\dx-\frac{(n+2)^2}{4\lambda^2}\big\|\mathcal{C}_{n+2}^{(\lambda)}\big\|_2^2.
%%		\qedhere
%%		\end{split}
%%	\end{equation*}
%\end{proof}

\section{Proof of the Main Results}
	\begin{proof}[Proof of Theorem \ref{thm_L2norm}] Subtract the left hand sides of Lemma \ref{lem_GegenbauerSumSquares} and Lemma~\ref{lem_GegenbauerXSquared}:
			\begin{equation*}
		\begin{split}
		2\big\|\mathcal{C}_{n}^{(\lambda+1)}\big\|^2_2&+\frac{1}{2\lambda}	\big\|\sqrt{1-x^2}\ \mathcal{C}_{n+1}^{(\lambda+1)}\big\|_2^2=\Big(\frac{(n+2)^2}{8 \lambda^3}-
		\frac{n+2}{4 \lambda^2}\Big)\big[\mathcal{C}_{n+2}^{(\lambda)}(1)\big]^2\\
		&\hspace{0.5cm}+\Big(\frac{(n+2)^2}{4 \lambda^2}+
		\frac{(n+2)(n+3)}{4 \lambda^2}\Big)\big\|\mathcal{C}_{n+2}^{(\lambda)}\big\|_2^2-\frac{1}{2\lambda}\frac{(n+2)^2}{4 \lambda^2}\big\|\mathcal{C}_{n+2}^{(\lambda)}\big\|_2^2;
		\end{split}
		\end{equation*}
		 an application of Dette's formula \eqref{eq_Dette} then gives the desired expression.
	\end{proof}
\begin{corollary}\label{cor_OneMinusXsquaredNorm}
	The Gegenbauer polynomials satisfy the following identity:
	\begin{align*}
		\big\|\sqrt{1-x^2}\ \mathcal{C}_{n-1}^{(\lambda+1)}\big\|_2^2&\\
		&\hspace{-2.5cm}=
\big[\mathcal{C}_{n}^{(\lambda)}(1)\big]^2\ \frac{n+2\lambda}{n+1}\frac{1-2\lambda}{2^3\lambda^2}+\frac{ (n+1)(2n+3)}{2^3\lambda^2}\big\|\mathcal{C}_{n+1}^{(\lambda)}\big\|_2^2-\big\|\mathcal{C}_{n}^{(\lambda)}\big\|_2^2\ \frac{ n+2\lambda}{2^3\lambda^2}.
	\end{align*}
\end{corollary}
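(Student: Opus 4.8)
The plan is to avoid evaluating the partial sum $\sum_{k=0}^{n-1}\frac{\lambda+k}{2^2\lambda^2}\|\mathcal{C}_k^{(\lambda)}\|_2^2$ from Theorem \ref{thm_L2norm} directly. Instead I would express $\|\sqrt{1-x^2}\,\mathcal{C}_{n-1}^{(\lambda+1)}\|_2^2$ through a \emph{difference} of two consecutive norms $\|\mathcal{C}_{m-2}^{(\lambda+1)}\|_2^2$, because in such a difference the two partial sums telescope down to the single term $\frac{\lambda+n}{2^2\lambda^2}\|\mathcal{C}_n^{(\lambda)}\|_2^2$, leaving a sum-free expression.

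First I would replace $n$ by $n-2$ in Lemma \ref{lem_GegenbauerXSquared} and use $x^2=1-(1-x^2)$ to write $\int_0^1 x^2\big[\mathcal{C}_{n-1}^{(\lambda+1)}\big]^2\dx=\|\mathcal{C}_{n-1}^{(\lambda+1)}\|_2^2-\|\sqrt{1-x^2}\,\mathcal{C}_{n-1}^{(\lambda+1)}\|_2^2$. Solving for the target quantity gives
\[
\big\|\sqrt{1-x^2}\,\mathcal{C}_{n-1}^{(\lambda+1)}\big\|_2^2=\Big(\big\|\mathcal{C}_{n-1}^{(\lambda+1)}\big\|_2^2-\big\|\mathcal{C}_{n-2}^{(\lambda+1)}\big\|_2^2\Big)-\frac{n}{4\lambda^2}\Big(\big[\mathcal{C}_n^{(\lambda)}(1)\big]^2-(n+1)\big\|\mathcal{C}_n^{(\lambda)}\big\|_2^2\Big).
\]
Next I would evaluate the bracketed difference by applying Theorem \ref{thm_L2norm} once with running index $n$ and once with $n+1$ and subtracting. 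Since $\sum_{k=0}^{n}-\sum_{k=0}^{n-1}=\frac{\lambda+n}{4\lambda^2}\|\mathcal{C}_n^{(\lambda)}\|_2^2$, the difference becomes a sum-free combination of $[\mathcal{C}_{n+1}^{(\lambda)}(1)]^2$, $[\mathcal{C}_n^{(\lambda)}(1)]^2$, $\|\mathcal{C}_{n+1}^{(\lambda)}\|_2^2$ and $\|\mathcal{C}_n^{(\lambda)}\|_2^2$. I would then eliminate the value $[\mathcal{C}_{n+1}^{(\lambda)}(1)]^2$ via the ratio $\mathcal{C}_{n+1}^{(\lambda)}(1)=\tfrac{n+2\lambda}{n+1}\,\mathcal{C}_n^{(\lambda)}(1)$, which is immediate from the closed form in \eqref{eq_chebyshevT_basics}, so that $[\mathcal{C}_{n+1}^{(\lambda)}(1)]^2=\big(\tfrac{n+2\lambda}{n+1}\big)^2[\mathcal{C}_n^{(\lambda)}(1)]^2$.

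The rest is collecting coefficients. The coefficient of $\|\mathcal{C}_{n+1}^{(\lambda)}\|_2^2$ is inherited unchanged as $\frac{(n+1)(2n+3)}{8\lambda^2}$, matching the claim. The coefficient of $\|\mathcal{C}_n^{(\lambda)}\|_2^2$ reduces to $-\frac{n+2\lambda}{8\lambda^2}$ after combining the three contributions $-\frac{n(2n+1)}{8\lambda^2}$, $-\frac{\lambda+n}{4\lambda^2}$ and $\frac{n(n+1)}{4\lambda^2}$. The only delicate point is the coefficient of $[\mathcal{C}_n^{(\lambda)}(1)]^2$: over the common denominator $16\lambda^3(n+1)$ its numerator is
\[
(n+1-2\lambda)(n+2\lambda)^2-n(n-2\lambda)(n+1)-4\lambda n(n+1),
\]
and I expect the main obstacle to be verifying that this cubic in $n$ collapses, the $n^3$ and $n^2$ terms cancelling identically, to the clean factorization $2\lambda(1-2\lambda)(n+2\lambda)$. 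This turns the coefficient into $\frac{n+2\lambda}{n+1}\frac{1-2\lambda}{8\lambda^2}$, completing the identity; every other step is routine substitution.
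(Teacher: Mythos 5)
Your proposal is correct and is essentially the paper's own proof: the paper likewise shifts Lemma \ref{lem_GegenbauerXSquared} to index $n-2$, rewrites $x^2$ as $1-(1-x^2)$ to bring in $\big\|\sqrt{1-x^2}\,\mathcal{C}_{n-1}^{(\lambda+1)}\big\|_2^2$, evaluates $\big\|\mathcal{C}_{n-1}^{(\lambda+1)}\big\|_2^2-\big\|\mathcal{C}_{n-2}^{(\lambda+1)}\big\|_2^2$ by applying Theorem \ref{thm_L2norm} at indices $n+1$ and $n$ so the sums telescope, and uses $\mathcal{C}_{n+1}^{(\lambda)}(1)=\tfrac{n+2\lambda}{n+1}\mathcal{C}_{n}^{(\lambda)}(1)$ before collecting coefficients. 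Your coefficient computations (including the collapse of the cubic numerator to $2\lambda(1-2\lambda)(n+2\lambda)$) check out, so there is nothing to fix.
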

\begin{proof}
	We use Lemma \ref{lem_GegenbauerXSquared}, add zero and obtain with Theorem \ref{thm_L2norm}
	\begin{align*}
			\frac{n}{4 \lambda^2}&\Big(\big[\mathcal{C}_{n}^{(\lambda)}(1)\big]^2
		-(n+1)\big\|\mathcal{C}_{n}^{(\lambda)}\big\|_2^2\Big)+	\int_0^1(1-x^2)\big[\mathcal{C}_{n-1}^{(\lambda+1)}(x)\big]^2\dx\\
		&=\int_0^1\big[\mathcal{C}_{n-1}^{(\lambda+1)}(x)\big]^2-\big[\mathcal{C}_{n-2}^{(\lambda+1)}(x)\big]^2\dx\\
		&=\frac{(n+1)^2-2\lambda (n+1)}{2^4\lambda^3}\big[\mathcal{C}_{n+1}^{(\lambda)}(1)\big]^2+\frac{ (n+1)(2n+3)}{2^3\lambda^2}\big\|\mathcal{C}_{n+1}^{(\lambda)}\big\|_2^2\\
		&\hspace{1cm}-\sum_{k=0}^{n}\frac{\lambda+k}{2^2\lambda^2}\big\|\mathcal{C}_{k}^{(\lambda)}\big\|_2^2\\
		&\hspace{1cm}-\frac{n^2-2\lambda n}{2^4\lambda^3}\big[\mathcal{C}_{n}^{(\lambda)}(1)\big]^2-\frac{ n(2n+1)}{2^3\lambda^2}\big\|\mathcal{C}_{n}^{(\lambda)}\big\|_2^2+\sum_{k=0}^{n-1}\frac{\lambda+k}{2^2\lambda^2}\big\|\mathcal{C}_{k}^{(\lambda)}\big\|_2^2\\
		&=\big[\mathcal{C}_{n}^{(\lambda)}(1)\big]^2\Big(	\frac{(n+1)^2-2\lambda (n+1)}{2^4\lambda^3}\frac{(n+2\lambda)^2}{(n+1)^2}	-\frac{n^2-2\lambda n}{2^4\lambda^3}\Big)\\
		&\hspace{1cm}+\frac{ (n+1)(2n+3)}{2^3\lambda^2}\big\|\mathcal{C}_{n+1}^{(\lambda)}\big\|_2^2-\big\|\mathcal{C}_{n}^{(\lambda)}\big\|_2^2\Big(\frac{ n(2n+1)}{2^3\lambda^2}+\frac{\lambda+n}{2^2\lambda^2}\Big)\\	&=\big[\mathcal{C}_{n}^{(\lambda)}(1)\big]^2\ \frac{2n^2+3n+2\lambda-2\lambda n-4\lambda^2}{2^3\lambda^2(n+1)}\\
		&\hspace{1cm}+\frac{ (n+1)(2n+3)}{2^3\lambda^2}\big\|\mathcal{C}_{n+1}^{(\lambda)}\big\|_2^2-\big\|\mathcal{C}_{n}^{(\lambda)}\big\|_2^2\Big(\frac{ 2n^2+3n+2\lambda}{2^3\lambda^2}\Big).
	\end{align*}
We re-order to obtain the result.
%	\begin{align*}
%	\big\|\sqrt{1-x^2}\ \mathcal{C}_{n-1}^{(\lambda+1)}\big\|_2^2&=
%		\big[\mathcal{C}_{n}^{(\lambda)}(1)\big]^2\ \frac{n+2\lambda}{n+1}\frac{1-2\lambda}{2^3\lambda^2}-\big\|\mathcal{C}_{n}^{(\lambda)}\big\|_2^2\ \frac{ n+2\lambda}{2^3\lambda^2}\\
%		&\hspace{3cm}+\frac{ (n+1)(2n+3)}{2^3\lambda^2}\big\|\mathcal{C}_{n+1}^{(\lambda)}\big\|_2^2. \qedhere
%	\end{align*}
\end{proof}
\begin{remark}
  For our asymptotic analysis we will need the  following identity, which follows from the proof of Theorem \ref{thm_L2norm} and Corollary \ref{cor_OneMinusXsquaredNorm}:
  \begin{equation}\label{rem_AsymptoticHelp}
  	\begin{split}
  	\big\|\mathcal{C}_{n-2}^{(\lambda+1)}\big\|_2^2&=\frac{n^2-2\lambda n}{2^4\lambda^3}\big[\mathcal{C}_{n}^{(\lambda)}(1)\big]^2+\frac{ 2n^2(4\lambda-1)+n(4\lambda+1)+2\lambda}{2^5\lambda^3}\big\|\mathcal{C}_{n}^{(\lambda)}\big\|_2^2\\
  	&\hspace{1.2cm}-\big[\mathcal{C}_{n}^{(\lambda)}(1)\big]^2\ \frac{n+2\lambda}{n+1}\frac{1-2\lambda}{2^5\lambda^3}-\frac{ (n+1)(2n+3)}{2^5\lambda^3}\big\|\mathcal{C}_{n+1}^{(\lambda)}\big\|_2^2.
  	\end{split}
  \end{equation}
  		Using following asymptotic form, see \cite{Tricomi}: For $|z|\ra\infty$ and $\alpha,\beta\geq0$:
  	\begin{equation}\label{eq_Tricomi}
  	\frac{\Gamma(z+\alpha)}{\Gamma(z+\beta)}=z^{\alpha-\beta}\Big(1+\frac{(\alpha-\beta)(\alpha+\beta-1)}{2z}+O(|z|^{-2})\Big),
  	\end{equation}
  	we obtain by \eqref{eq_chebyshevT_basics} for $\lambda>0$:
  	\begin{equation}\label{eq_asymptoticGegenbauer1}
  	\Gamma(2\lambda)^2\cdot 	\big[\mathcal{C}_{n}^{(\lambda)}(1)\big]^2=n^{4\lambda-2}+2\lambda(2\lambda-1)n^{4\lambda-3}		+O(n^{4\lambda-4}).
  	\end{equation}
\end{remark}
\begin{proof}[Proof of Corollary \ref{cor_maintheorem}]  
	We will write $\|\mathcal{C}_{n}^{(\lambda)}\|_2^2=\Theta(n^{\Phi(\lambda)})$ if there are  some constants $c_1,c_2>0$ such that $c_1n^{\Phi(\lambda)}\leq\|\mathcal{C}_{n}^{(\lambda)}\|_2^2 \leq c_2n^{\Phi(\lambda)}$ for all $n$ big enough. First we use \eqref{rem_AsymptoticHelp} to show by induction that  $\Phi(\lambda)$ exists for $\lambda>1$, and that $\big[\mathcal{C}_{n}^{(\lambda)}(1)\big]^2=\Theta(n^{\Phi(\lambda)+2})$.	
\paragraph{The case $\lambda=m\in\N_{>1}$:}  Lemma \ref{lem_Ferizovic} gives the result for  $\lambda=2$,
% and with \eqref{rem_AsymptoticHelp}, it follows that $\big[\mathcal{C}_{n}^{(3)}(1)\big]^2=\Theta(n^{\Phi(3)+2})$.
and if it holds for $m$, then with \eqref{rem_AsymptoticHelp} and abuse of notation we have:
\begin{equation*}
\big\|\mathcal{C}_{n-2}^{(m+1)}\big\|_2^2=n^2\  \Theta\big(n^{\Phi(m)+2}\big)+n^2\ \Theta\big(n^{\Phi(m)}\big)+\Theta\big(n^{\Phi(m)+2}\big)+n^2\ \Theta\big(n^{\Phi(m)}\big).
\end{equation*}
This proves the claim as it shows that  $\|\mathcal{C}_{n}^{(m+1)}\|_2^2=\Theta(n^{\Phi(m)+4})$, but by \eqref{eq_chebyshevT_basics}:
\begin{equation}\label{eq_GegenIndex}
\mathcal{C}_{n}^{(\lambda+1)}(1)=\frac{(2\lambda+n+1)(2\lambda+n)}{2\lambda(2\lambda+1)}
\mathcal{C}_{n}^{(\lambda)}(1)=\Theta\big(n^2\mathcal{C}_{n}^{(\lambda)}(1)\big),
\end{equation}
which, when squared and $\lambda=m$, is of order $\Phi(m)+6.$ 

\paragraph{The case $\lambda\in (m,m+1)$ for $m\in\N$:} For  $\lambda\in (0,1)$ and $\theta\in[0,\pi]$:
$$\sin(\theta)^{\lambda}\big|\mathcal{C}_{n}^{(\lambda)}(\cos(\theta))\big|< \frac{2^{1-\lambda}}{\Gamma(\lambda)} n^{\lambda-1}\hspace{1cm}\mbox{see \cite[Eq. 7.33.5]{Szego}}.$$
We square this inequality, multiply by $\sin(\theta)^{1-2\lambda}$ and integrate:
$$\big\|\mathcal{C}_{n}^{(\lambda)}\big\|_2^2<\frac{2^{2-2\lambda}}{\Gamma(\lambda)^2} n^{2\lambda-2}\int_0^{\pi/2} \sin(\theta)^{1-2\lambda}\ \mathrm{d}\theta=
\mathcal{B}\big(1-\lambda,\tfrac{1}{2}\big)\frac{2^{1-2\lambda}}{\Gamma(\lambda)^2}n^{2\lambda-2}; $$
where we used a change of variables $\theta=\arcsin(x)$ and $\mathcal{B}(x,y)$ is the beta function.
This in combination with \eqref{eq_asymptoticGegenbauer1} and \eqref{rem_AsymptoticHelp} gives for $\delta=\max\{4\lambda-1,2\lambda\}$:
\begin{equation}\label{eq_resttermlambdasmall}
\big\|\mathcal{C}_{n}^{(\lambda+1)}\big\|_2^2=\frac{n^{4\lambda}}{2^4\lambda^3\Gamma(2\lambda)^2}+O\big(n^{\delta}\big).
\end{equation}
Thus for $\lambda\in (0,1)$: $\Phi(\lambda+1)=4\lambda$, and $[\mathcal{C}_{n}^{(\lambda+1)}(1)]^2=\Theta\big(n^{4\lambda+2}\big)$ by \eqref{eq_asymptoticGegenbauer1}, which finishes the case for the interval  $ (1,2)$ and we use induction with \eqref{rem_AsymptoticHelp} and \eqref{eq_GegenIndex}.

Thus  the two leading terms in the asymptotic form of $\|\mathcal{C}_{n}^{(\lambda+1)}\|_2^2$ are in the expansion of  $ \mathcal{C}_{n}^{(\lambda)}(1)$ when $\lambda>1$; using once more  \eqref{eq_asymptoticGegenbauer1} and \eqref{rem_AsymptoticHelp} yields
\begin{equation*}
\frac{n^2-2\lambda n}{2^4\lambda^3}\big[\mathcal{C}_{n}^{(\lambda)}(1)\big]^2=
\frac{n^{4\lambda}}{4\lambda\Gamma(2\lambda+1)^2}+
\frac{2\lambda(2\lambda-2)}{4\lambda\Gamma(2\lambda+1)^2}n^{4\lambda-1}
+O(n^{4\lambda-2}).
\end{equation*}

The asymptotic of the rest term of $\|\mathcal{C}_{n}^{(\lambda+1)}\|_2^2$ follows by \eqref{rem_AsymptoticHelp}, equation  \eqref{eq_resttermlambdasmall} and induction for non-integer $\lambda\in\R_{>2}$ or else  Lemma \ref{lem_Ferizovic} and induction  when $\lambda\in\N_{> 2}$. Now, these asymptotic formulas in combination with Corollary \ref{cor_OneMinusXsquaredNorm} and \eqref{eq_asymptoticGegenbauer1} will finish the argument: As an illustration, let $1<\lambda\leq 2$, the cases $0<\lambda<1$ and $\lambda>2$ are similar; let $\rho=\max\{4\lambda-3,2\lambda\}$, then 
\begin{align*}
\big\|\sqrt{1-x^2}\ \mathcal{C}_{n-1}^{(\lambda+1)}\big\|_2^2&=
\frac{1-2\lambda}{2^3\lambda^2}\big[\mathcal{C}_{n}^{(\lambda)}(1)\big]^2+\frac{ 2n^2}{2^3\lambda^2}\|\mathcal{C}_{n+1}^{(\lambda)}\|_2^2+O(n^{\rho})\\
&=
\frac{1-2\lambda}{2^3\lambda^2}\frac{n^{4\lambda-2}}{\Gamma(2\lambda)^2}+\frac{ 2n^2}{2^3\lambda^2}	\frac{n^{4\lambda-4}}{4(\lambda-1)\Gamma(2\lambda-1)^2}+O(n^{\rho})\\
&=
\frac{n^{4\lambda-2}}{2^3\lambda^2\Gamma(2\lambda-1)^2}\Big(\frac{1-2\lambda}{(2\lambda-1)^2}+\frac{1}{2(\lambda-1)}\Big)+O(n^{\rho})\\
&=
\frac{n^{4\lambda-2}}{2^3\lambda^2\Gamma(2\lambda-1)^2}\frac{1}{(2\lambda-1)(\lambda-1)2}+O(n^{\rho}).\qedhere
\end{align*}
\end{proof}
\begin{remark}\label{rem_lamdaEqualOne}
One can use Lemma \ref{lem_Ferizovic}, Corollary \ref{cor_OneMinusXsquaredNorm} and identity \eqref{rem_AsymptoticHelp} to find  formulas  for $\|\sqrt{1-x^2}\ \mathcal{C}_{n}^{(m)}\|_2^2$ and $\|\mathcal{C}_{n}^{(m)}\|_2^2$ where $m\in\N_{>1}$.
\end{remark}

\begin{acknowledgement}
	The help of J. Brauchart,  P. Grabner and J. Thuswaldner is gratefully appreciated; who proof read the manuscript, made useful remarks on   presentation and suggested to generalize Corollary \ref{cor_maintheorem} from $\lambda\in\onehalf\N$ to $\lambda>0$.
\end{acknowledgement}

\end{document}